\author{Tom Enkosky}
\title{Counting points of slope varieties over finite fields}
\newcommand{\ol}{\overline}
\newcommand{\st}{~|~}
\newcommand{\Ff}{\mathbb{F}}
\newcommand{\Zz}{\mathbb{Z}}
\newtheorem{thm}{Theorem}
\newtheorem{prop}[thm]{Proposition}
\newtheorem{cor}[thm]{Corollary}
\theoremstyle{definition}
\newtheorem{defn}[thm]{Definition}
\newtheorem{example}[thm]{Example}
\newcommand{\Ra}{\Rightarrow}
\newcommand{\La}{\Leftarrow}
\newcommand{\ds}{\displaystyle}
\begin{document}
\maketitle

\begin{abstract} The slope variety of a graph is an algebraic set 
whose points correspond to drawings of a graph.  A
complement-reducible graph (or cograph) is a graph without an induced
four-vertex path.  We construct a bijection between the zeroes of the
slope variety of the complete graph on $n$ vertices over
$\mathbb{F}_2$, and the complement-reducible graphs on $n$ vertices.
\end{abstract}

\section{Introduction}
Fix a field $\Ff$ and a positive integer~$n$.  Let
$P_1=(x_1,y_1),\dots,P_n=(x_n,y_n)$ be points in the plane $\Ff^2$
such that the $x_i$ are distinct.  Let $L_{1,2},\dots,L_{n-1,n}$ be
the $\binom{n}{2}$ lines in $\Ff^2$ where $L_{i,j}$ is the line
through $P_i$ and $P_j$.  The \emph{slope variety} $S_n(\Ff)$ is the
set of possible $\binom{n}{2}$-tuples $(m_{1,2},\dots,m_{n-1,n})$,
where $m_{i,j}=\frac{y_i-y_j}{x_i-x_j}$ denotes the slope of
$L_{i,j}$.  Over an algebraically closed field of characteristic zero,
the slope variety is the set of simultaneous solutions of certain
polynomials $\tau_W$, called \emph{tree polynomials}
\cite{GGV,Slopes}, indexed by wheel subgraphs of the complete graph
$K_n$.  (A $k$-wheel is a graph formed from a cycle of length~$k$ by
introducing a new vertex adjacent to all vertices in the cycle.)  It
is conjectured, and has been verified experimentally for $n\leq 9$,
that the ideal of all tree polynomials is in fact generated by the
subset $\{\tau_Q\}$ \cite{Slopes}, where $Q$ is a 3-wheel
(equivalently, a 4-clique) in $K_n$.

The tree polynomials have integer coefficients, which raises the
question of counting their solutions over a finite field.  Let $\Ff_q$
be the field with $q$ elements.  In this article, we count the
solutions of the tree polynomials over $\Ff_2$ and give some
generalizations for $q>2$.

\begin{thm}\label{MainTheorem}
Let $n$ be a positive integer and let
$\Ff_2[K_n]:=\Ff_2[m_{1,2},\dots,m_{n-1,n}]$.  Let $I_n$ denote the
ideal of $\Ff_2[K_n]$ generated by the tree polynomials of wheel
subgraphs of $K_n$, and let $J_n$ denote the ideal generated by the
tree polynomials of $K_4$-subgraphs of $K_n$ (so $J_n\subseteq I_n$).

Then the following sets are equinumerous:
\begin{enumerate}
\item \label{ZIn} the zeroes of $I_n$, i.e., the points in
  $\Ff_2^{\binom{n}{2}}$ on which all tree polynomials vanish;
\item \label{ZJn} the zeroes of $J_n$, i.e., the points in 
  $\Ff_2^{\binom{n}{2}}$ on which all tree polynomials of 3-wheels vanish;
\item \label{cog} complement-reducible graphs (or ``cographs'') on vertex 
  set~$[n]=\{1,2,\dots,n\}$, that is, graphs on $[n]$ having no induced 
  subgraph isomorphic to a four-vertex path;
\item \label{swq} switching-equivalence classes of graphs on vertex
  set $[n+1]$ such that no member of the class contains an induced
  5-cycle.
\end{enumerate}
\end{thm}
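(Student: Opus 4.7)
The plan is to establish the theorem through the chain of equivalences (\ref{cog})~$\Leftrightarrow$~(\ref{ZJn})~$\Leftrightarrow$~(\ref{ZIn}) together with (\ref{cog})~$\Leftrightarrow$~(\ref{swq}). Throughout, I identify $\Ff_2^{\binom{n}{2}}$ with the set of graphs on vertex set $[n]$ by reading each coordinate $m_{ij}\in\{0,1\}$ as the indicator of the edge $ij$.

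For (\ref{cog})~$\Leftrightarrow$~(\ref{ZJn}): the condition defining $V(J_n)$ is local, requiring only that the $K_4$ tree polynomial $\tau_Q$ vanish on every $4$-subset of vertices. I would write out $\tau_Q$ explicitly for a single $4$-clique $Q$ on $\{a,b,c,d\}$, reduce it modulo $2$, and verify by direct inspection of the $2^6=64$ possible $(m_{ij})\in\Ff_2^6$ that $\tau_Q(m)=0$ precisely when the induced subgraph $G[\{a,b,c,d\}]$ is not a $4$-vertex path. Since a cograph is by definition a graph with no induced $P_4$ on any four-vertex subset, this yields the correspondence between $V(J_n)$ and cographs on $[n]$.

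For (\ref{ZJn})~$\Leftrightarrow$~(\ref{ZIn}): the containment $J_n\subseteq I_n$ gives $V(I_n)\subseteq V(J_n)$ automatically, so only the reverse set-theoretic inclusion requires argument. Suppose a point of $V(J_n)$ corresponds to a cograph $G$; one must show $\tau_W$ vanishes at this point for every $k$-wheel $W\subseteq K_n$ with $k\geq 4$. The natural tool is the recursive construction of cographs as iterated disjoint unions and joins of singletons, together with the hereditary property that every induced subgraph of a cograph is a cograph: in particular, the $k+1$ vertices of $W$ span a cograph, and one seeks to express $\tau_W$ modulo $2$ as an $\Ff_2[K_n]$-combination of the $\tau_Q$ for $K_4$-subgraphs of $W$, each of which already vanishes on $G$.

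For (\ref{cog})~$\Leftrightarrow$~(\ref{swq}): I would use Seidel switching at vertex $n+1$ to select a canonical representative in each switching class, the unique one in which $n+1$ is isolated; this gives a natural bijection between switching classes on $[n+1]$ and graphs $G$ on $[n]$. It remains to prove that $G$ is $P_4$-free if and only if no member of the switching class of $G\sqcup\{n+1\}$ contains an induced $C_5$. One direction is a direct calculation: if $abcd$ is an induced $P_4$ in $G$, switching $G\sqcup\{n+1\}$ at $\{a,d\}$ produces an induced $C_5$ on $\{a,c,b,d,n+1\}$. The other direction is handled by cases on whether $n+1$ participates in the hypothetical $C_5$, using the observation that every member of the switching class of $C_5$ contains an induced $P_4$. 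The main obstacle is (\ref{ZJn})~$\Rightarrow$~(\ref{ZIn}): the conjecture mentioned in the introduction (that $J_n=I_n$ as ideals) is open, so only the weaker set-theoretic coincidence of varieties over $\Ff_2$ is available, and leveraging the cograph characterization together with the join/disjoint-union recursion to control the higher wheel polynomials is the key combinatorial step.
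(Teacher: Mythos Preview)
Your treatment of (\ref{cog})~$\Leftrightarrow$~(\ref{ZJn}) and (\ref{cog})~$\Leftrightarrow$~(\ref{swq}) is essentially the paper's, only less conceptual: the paper replaces your $64$-case check by a single lemma (its Proposition~\ref{TreeNotZero}) stating that over $\Ff_2$, $\tau_W(a)\neq 0$ for a wheel $W$ if and only if $G_a\cap W$ is a \emph{coupled spanning tree} of $W$; for $W=K_4$ the coupled spanning trees are exactly the induced $P_4$'s, which recovers your computation uniformly.

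The gap is in (\ref{cograph})~$\Rightarrow$~(\ref{In}). Your first suggestion---writing $\tau_W$ as an $\Ff_2[K_n]$-combination of $\tau_Q$'s---is precisely the ideal equality $I_n=J_n$ over $\Ff_2$, which the paper neither proves nor needs; you correctly note this yourself, but your fallback (``join/disjoint-union recursion to control the higher wheel polynomials'') is not an argument, because you have no handle on when a \emph{general} wheel polynomial vanishes at a given $\Ff_2$-point. The missing idea is exactly the lemma above, valid for wheels of every size and proved from the product formula
\[
\tau_W=\prod_{i=1}^k(m_{0,i}-m_{i,i+1})-\prod_{i=1}^k(m_{0,i}-m_{i-1,i}).
\]
With that lemma in hand, the paper does not recurse on the cograph structure at all: it invokes the characterization that every induced subgraph $H$ of a cograph has $H$ or $\overline{H}$ disconnected. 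Applied to $V(W)$, this forces $G_a\cap W$ or its complement in $W$ to be disconnected, so neither is a (coupled) spanning tree, and hence $\tau_W(a)=0$ by the lemma. This is a direct, wheel-by-wheel vanishing argument, not an algebraic reduction to $3$-wheels.
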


We will explain all these combinatorial interpretations below.  The
following Theorem appears in \cite[Exercise 5.40]{EC2} and is credited
to Cameron \cite{Cameron}.

\begin{thm}\label{SPN-thm}
  The following sets are equinumerous:
  \begin{enumerate}
  \item \label{swq2} switching-equivalence classes of graphs on vertex
    set $[n+1]$ such that no member of the class contains an induced
    5-cycle;
  \item \label{spp} series-parallel posets with $n$ labeled vertices;
  \item \label{spn} series-parallel networks with $n$ labeled edges.
  \end{enumerate}
\end{thm}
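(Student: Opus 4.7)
The proof proceeds in two stages: first a bijection between (\ref{spp}) and (\ref{spn}) via a shared decomposition tree, then Cameron's bijection between (\ref{swq2}) and the others via modular decomposition of the associated two-graph.

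\textbf{Stage 1: $(\ref{spp}) \leftrightarrow (\ref{spn})$.} Every series-parallel poset $P$ on $n$ labeled vertices admits a canonical decomposition tree: if $|P|=1$, the tree is a single labeled leaf; otherwise $P$ decomposes as either a maximal series composition $P_1 \oplus \cdots \oplus P_k$ or a maximal parallel composition $P_1 + \cdots + P_k$ with $k \geq 2$, where each $P_i$ has the opposite top-level operation (or is a singleton). Iterating, one obtains a rooted tree with $n$ labeled leaves, internal nodes labeled S or P alternating along every root-to-leaf path, and every internal node of arity at least two. An entirely parallel recursion applies to series-parallel networks, with labeled edges at the leaves and series/parallel compositions at internal nodes. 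Each such tree canonically determines one SP poset and one SP network, so (\ref{spp}) and (\ref{spn}) are both in bijection with this common family of alternating S/P trees on $n$ labeled leaves.

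\textbf{Stage 2: $(\ref{swq2}) \leftrightarrow (\ref{spp})$.} To associate an alternating S/P tree on $n$ labeled leaves to each switching class $[G]$ on $[n+1]$ containing no induced pentagon, I would use the modular decomposition of the two-graph of $[G]$, rooted at the distinguished vertex $n+1$. A subset $M$ of the other vertices is a module when every remaining vertex ``sees'' every element of $M$ identically through the two-graph. The quotient at each internal node of the resulting decomposition is, \emph{a priori}, a series node, a parallel node, or a prime node. Under the pentagon-free hypothesis, no prime nodes arise, so the tree is fully alternating S/P; removing the root leaf labeled $n+1$ yields a decomposition tree of an SP poset on the remaining $n$ labels. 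Conversely, from an alternating S/P tree on $n$ leaves one builds a graph on $[n]$ by recursively combining subgraphs via joins (at S-nodes) and disjoint unions (at P-nodes), attaches $n+1$ as a root, and takes the resulting switching class; by construction this class contains no induced pentagon.

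\textbf{Main obstacle.} The heart of the argument is the structural claim in Stage 2 that a two-graph on at least five points is prime in the modular decomposition sense if and only if it contains an induced pentagon. This is the two-graph analogue of the Gallai--Seidel decomposition for cographs, and it is the classification theorem underlying Cameron's result. Once it is established, the inductive construction of the tree (and its inverse) goes through routinely. Verifying the claim itself requires careful examination of modules in two-graphs and case-checking to exclude any pentagon-free prime configuration, so this is where I would expect to spend most of the effort.
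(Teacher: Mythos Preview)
The paper does not prove Theorem~\ref{SPN-thm}. It is quoted as a known result with the attribution that it ``appears in [Stanley, EC2, Exercise~5.40] and is credited to Cameron,'' and the paper's own arguments address only Theorem~\ref{MainTheorem}, via Propositions~\ref{ZeroCograph} and~\ref{Cog5Cyc}. There is therefore no proof in the paper against which to compare your proposal.

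For what it is worth, your sketch follows the standard route through decomposition trees. In Stage~1 you should make explicit that the symmetries line up (ordinal sum of posets and series connection of two-terminal networks are both non-commutative, while disjoint union and parallel connection are both commutative), since the phrase ``an entirely parallel recursion'' hides exactly this verification. In Stage~2 you have correctly isolated the substantive claim---that a pentagon-free two-graph admits no prime quotient in its modular decomposition---which is precisely the content of Cameron's theorem. Your proposal is thus essentially a plan to invoke Cameron rather than an independent proof, but since the paper itself merely cites the result, that is entirely in keeping with its treatment.
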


In this paper, we use the special structure of tree polynomials to
prove first the equality of (\ref{ZIn}), (\ref{ZJn}) and (\ref{cog})
of Theorem \ref{MainTheorem} (Proposition \ref{ZeroCograph}), and then
a bijection between (\ref{cog}) and (\ref{swq}) of Theorem
\ref{MainTheorem} (Proposition \ref{Cog5Cyc}).

We note that a bijection between \emph{unlabeled} complement-reducible
graphs and \emph{unlabeled} series-parallel networks was given by
Sloane, see sequence A000084 \cite{OEIS}.  We have not found in the
literature an explicit bijection for the corresponding labeled
objects.  The numbers of points in $S_2(K_1)$, $S_2(K_2)$, \dots, are
  $$1,\ 2,\ 8,\ 52,\ 472,\ 5504,\ 78416,\ \dots$$
which is sequence A006351 in \cite{OEIS}.

\section{Background}
\subsection{Graph theory}
We list some necessary notation here; for a general background on
graph theory see \cite{Bollobas} or \cite{West}.  A \emph{graph} $G$
is an ordered pair $(V,E)$ of \emph{vertices} and \emph{edges} i.e.,
$V$ is a finite set, and $E$ is a set of 2-subsets of $V$.  Two
vertices $u,v\in V$ are adjacent if there is an edge $uv\in E$ between
them.  If $V,E$ are not specified then $V(G)$ is the set of vertices
of $G$ and $E(G)$ is the edge set.  All graphs in this paper are
\emph{simple}, i.e., they have no loops or multiple edges.

For $U\subseteq V$, the \emph{induced subgraph} $G|_U$ of $G$ on $U$,
is the graph with vertex set $U$ and edge set $\{uv\in E(G)\st u,v\in
U\}$.  The \emph{intersection} $G\cap H$ of two graphs $G$ and $H$ is the
graph with vertex set $V(G)\cap V(H)$ and edge set $E(G)\cap E(H)$.
The \emph{complement} $\ol{G}$ of $G$ is the graph on the same set of
vertices as $G$ whose edges are exactly the non-edges of $G$.

Let $K_n$ denote the complete graph on $n$ vertices.  Let $P_n$ denote
the path on $n$ vertices, also called the $n$-path.  A
\emph{complement-reducible graph}, or \emph{cograph}, has no induced
$P_4$.  An important fact that we will need is that $G$ is
complement-reducible if and only if for every induced subgraph
$H\subseteq G$, either $H$ or the complement $\ol{H}$ is disconnected;
see \cite{Corneil}.

The \emph{$k$-wheel} $W(v_0;v_1,\dots,v_k)$ is the graph with vertices
$\{v_0,\dots,v_k\}$ and edges $v_0v_1,\dots,v_0v_k$,
$v_1v_2,\dots,v_{k-1}v_k,v_kv_1$, $k\geq 3$.  Note that the wheel
$W(v_0;v_1,\dots,v_k)$ is invariant up to dihedral permutations of
$v_1,\dots,v_k$.  The vertex $v_0$ is called the \emph{center}; the
other vertices are called the \emph{spokes}.  The edges incident to
the center are called the \emph{radii}, and the other edges are
\emph{chords}.  Note that a 3-wheel is the complete graph on four
vertices.

\subsection{Series-parallel networks}
A \emph{network} is a graph $G$ with two vertices $s_G,t_G$ designated
as the \emph{source} and \emph{sink}, respectively.  Two networks $G$
and $H$ can be connected in \emph{series} or \emph{parallel}.  The
\emph{series connection} $G\oplus H$ is defined by identifying $t_G$
with $s_H$, and designating $s_G$ as the source and $t_H$ as the sink.
The \emph{parallel connection} $G+H$ is defined by identifying $s_G$
with $s_H$ and $t_G$ with $t_H$.

A \emph{series-parallel network} is a graph obtained from the
following rules:
\begin{enumerate}
\item a graph with one edge $st$ is a series-parallel network;
\item if $G$ and $H$ are series-parallel networks, then $G\oplus H$ and
$G+H$ are series-parallel networks.
\end{enumerate}

One can define series and parallel connections for posets in a
similar fashion; see \cite[Section~3.2]{EC1}.  Two posets $P$ and $Q$
are connected in series by taking their \emph{ordinal sum} $P\oplus
Q$: declaring that all elements of $Q$ are larger than all elements of
$P$ (or vice versa) leaving all other relations unchanged.  The two
posets are connected in parallel by taking the disjoint union.  A
\emph{series-parallel poset} is a poset built up from single-element
posets by series and parallel extensions.

Let $s(n)$ be the number of labeled series-parallel networks on $n$
vertices.  The sequence begins
  $$s(1)=1,\quad s(2)=2,\quad s(3)=8,\quad s(4)=52,\quad
s(5)=472,\quad s(6)=5504,\quad\dots$$ This is sequence A006351 in the
On-Line Encyclopedia of Integer Sequences \cite{OEIS}.

\subsection{Switching equivalence}\label{SwitchSec}
Let $G$ be a graph on $[n+1]$ and let
$X\subseteq [n]$.  The \emph{switch} of $G$ with respect to $X$ is the
graph $s_X(G)$ on $[n+1]$ whose edges $e$ satisfy
one of two conditions:
\begin{enumerate}
\item $e\in E(G)$ and either both vertices of $e$ belong to $X$ or
neither do;
\item $e\not\in E(G)$ and exactly one vertex of $e$ belongs to $X$.
\end{enumerate}
This operation is also referred to as graph switching or Seidel
switching \cite{GlosSgnGain}.  Let $\mathscr{G}_{n+1}$ be the set of
graphs on $[n+1]$.  Then switching defines an action of $\ds \Zz_2^n$
on $\mathscr{G}_{n+1}$.  For $\ds x=(x_1,\dots,x_n)\in\Zz_2^n$ let
$X=\{i\st x_i=1\}\subset [n]$.  Then the group action is $xG=s_X(G)$.
This action is free because $s_X(G)=G$ if and only if $X=\emptyset$.
The orbits are called \emph{switching classes}, denoted by $[G]$.  To
see that each orbit contains exactly one graph in which the vertex
$n+1$ is isolated, let $G\in\mathscr{G}_{n+1}$ and let $X=N(n+1)$ be
the set of neighbors of $n+1$.  Then the graph $s_X(G)$ has $n+1$ as
an isolated vertex.  On the other hand if $X$ is any other subset of
$[n]$, then $n+1$ will be adjacent to some vertex of $s_X(G)$.
The number of switching classes on $[n+1]$ is $s(n)$, the number of
labeled series-parallel networks \cite[Exercise~5.40(b)]{EC2},
\cite{Cameron}.

\subsection{Tree polynomials}
We briefly sketch the basics of graph picture spaces; for more details, see
\cite{GGV}.  
\begin{defn}
Let $G=(V,E)$ be a graph.  For each $e\in E$, let $m_e$ be a variable. 
For each subset $F\subseteq E$ define
$$m_F=\prod_{f\in F}m_f.$$
We regard the square-free monomial $m_F$ as corresponding to the spanning 
subgraph $(V,F)$, and we will often ignore the distinction between the monomial
and the graph.
\end{defn}
A \emph{picture} of a graph $G=(V,E)$ is a collection of
labeled points and lines in the plane, corresponding to the vertices and 
edges of $G$, respectively, such that the line $\ell_e$ corresponding to 
an edge $e$ contains both points corresponding to the endpoints of~$e$.  
Provided that no lines are vertical, each line $\ell_e$ has a well-defined 
slope $m_e$, and so each picture determines a \emph{slope vector} 
$(m_e)_{e\in E}$.  The set of all possible slope vectors is called the 
\emph{slope variety} $S(G)$.  

The slope variety is the set of common zeroes of the set of polynomials called
\emph{tree polynomials}, as we now explain.
A \emph{(rigidity) pseudocircuit} is a graph $H$ whose edge set can be
partitioned into two spanning trees.  A \emph{coupled spanning tree} of $H$
is a tree whose complement is also a spanning tree; the set of all
coupled  spanning trees of $H$ is denoted $Cpl(H)$.
For each pseudocircuit $H\subseteq G$, there is a polynomial
\begin{align}\label{cpltreepoly}
\tau_H=\sum_{T\in Cpl(H)}\epsilon(H,T)m_T
\end{align}
that vanishes on the slope variety of $G$;  where each
$\epsilon(H,T)\in\{1,-1\}$.  Call this polynomial the  \emph{tree polynomial}
of $H$.

Because the tree polynomials have integer coefficients, it makes sense 
to consider these polynomials inside the polynomial ring
$$\Ff_q[G]:=\Ff_q[m_e\st e\in E(G)].$$
Define the \emph{q-slope variety} $S_q(G)$ to be the zero set of 
the ideal generated by the tree 
polynomials of all pseudocircuit subgraphs of $G$.  The main concern
of this paper is $S_2(K_n)$, the set of zeroes of the complete graph 
over $\Ff_2$. 

The most important pseudocircuits are the wheels.  
The tree polynomial of the wheel $W=W(v_0;v_1,\dots,v_k)$ has the form
\begin{equation}
\label{treepoly}
\tau_W=\underbrace{\prod_{i=1}^k(m_{0,i}-m_{i,i+1})}_{\tau_1}
-\underbrace{\prod_{i=1}^k(m_{0,i}-m_{i-1,i})}_{\tau_2}
\end{equation}
where $m_{k,k+1}=m_{1,k}$ \cite[eqn.~(6)]{Slopes}.

Suppose we draw the wheel $W(v_0;v_1,\dots,v_k)$ with $v_0$ in the
center and the indices of the spokes increasing as we travel clockwise
around the perimeter.  Each binomial factor in $\tau_1$ is a radius
minus the adjacent chord pointing in the clockwise direction, whereas
each binomial factor in $\tau_2$ is a radius minus the adjacent chord
pointing in the counter-clockwise direction.  Therefore, if we expand
the expression \eqref{treepoly} for $\tau_W$, then the star subgraph
and the cycle of all the chords each occur twice, and with opposite
signs.  The only remaining terms are coupled spanning trees, which are
obtained by picking a nontrivial subset of radii along with all chords
pointing clockwise or counterclockwise, but not both.  See Figure
\ref{Comp-Spn-tree}.
\begin{figure}[hbt]
      \resizebox{2.1in}{1.4in}{\includegraphics{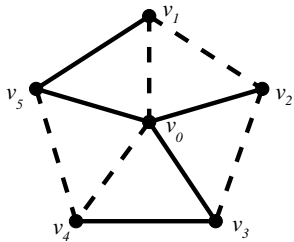}}
      \caption{Two complementary coupled spanning trees of a 5-wheel}
      \label{Comp-Spn-tree}
\end{figure}

The tree polynomials of all the wheels in $K_n$ generate the ideal of
tree polynomials of all rigidity pseudocircuits in $K_n$ \cite{Slopes}.
Define ideals $I_n,J_n\subseteq \Ff_2[G]$ as follows:
\begin{align*}
I_n &= (\tau_W \st W\text{ is a wheel in } K_n),\\
J_n &= (\tau_Q \st Q\subseteq K_n \text{ is isomorphic to } K_4).
\end{align*}
It was conjectured in \cite{Slopes} that $I_n=J_n$ when considered as
ideals over $\mathbb{C}$.  Using the computer algebra system Macaulay
this conjecture has been verified for $n\leq 9$ \cite{Slopes}.

\section{A bijection between slope vectors and complement-reducible graphs}
In this section we count the points of $S_2(K_n)$, the slope variety
of $K_n$ over $\mathbb{F}_2$.  The points of $\Ff_2^{\binom{n}{2}}$
have their coordinates indexed by the edges of $K_n$ and have value
either 0 or 1, which motivates the following notation:

\begin{defn}
Let 
$a=(a_{1,2},a_{1,3},\dots,a_{n-1,n})\in\Ff_2^{\binom{n}{2}}$.
We define the graph $G_a$ to be the graph on $[n]$ with edge set
$\ds E(G_a)=\{ij \st a_{i,j}=1\}$. % The vector $a$ is the characteristic
%vector of the edge set of $G_a$.
\end{defn}

\begin{prop}\label{TreeNotZero}
Let $W=W(v_0;v_1,\dots,v_k)$ be a wheel and 
$a\in\Ff_2^{\binom{n}{2}}$.  Then $\tau_W(a)\neq 0$
if and only if $H_a:=G_a\cap W$ is a coupled spanning tree of $W$.
\end{prop}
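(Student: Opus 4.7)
The plan is to work with the explicit formula \eqref{treepoly}, reduced mod $2$. Since $-1\equiv 1$ in $\Ff_2$, one has $\tau_W \equiv \tau_1+\tau_2$, so $\tau_W(a)\ne 0$ iff exactly one of $\tau_1(a),\tau_2(a)$ equals~$1$. Each factor $(m_{0,i}+m_{i,i+1})$ of $\tau_1$ evaluates to~$1$ precisely when $H_a$ contains exactly one of the radius $r_i:=v_0v_i$ and the clockwise chord $c_i:=v_iv_{i+1}$. Therefore $\tau_1(a)=1$ iff $H_a$ equals $T_S^+ := r_S\cup c_{[k]\setminus S}$ for some $S\subseteq[k]$, namely $S=\{i:r_i\in H_a\}$. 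The same analysis gives $\tau_2(a)=1$ iff $H_a=T_{S'}^-:=r_{S'}\cup c_{[k]\setminus(S'-1)}$, where $S'-1$ denotes the cyclic shift modulo $k$.

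I would then show that $\tau_1(a)=\tau_2(a)=1$ can hold only in two degenerate cases: $H_a$ is the chord cycle or the radial star. Indeed, matching radii forces $S=S'$, and matching chord sets then requires $[k]\setminus S=[k]\setminus(S-1)$, i.e., $S=S-1$; the only rotation-invariant subsets of $[k]$ are $\emptyset$ and $[k]$. Since neither the chord cycle (which omits $v_0$) nor the star (whose complement in $W$ is a cycle) is a coupled spanning tree, it follows that $\tau_W(a)\ne 0$ iff $H_a=T_S^\pm$ for some nontrivial $S$ with $\emptyset\ne S\subsetneq[k]$.

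The final task is to identify the $T_S^\pm$ (for nontrivial $S$) as exactly the coupled spanning trees of $W$. For the forward direction I would recast the picture sketched just after \eqref{treepoly}: removing the chords indexed by $S$ from the chord cycle on $v_1,\dots,v_k$ splits it into $|S|$ arcs, each containing a unique spoke in $S$, and adjoining $r_S$ attaches each arc to $v_0$ and produces a spanning tree; applying the same arc-count with $S$ and $[k]\setminus S$ interchanged shows that $\bar{T_S^+}=r_{[k]\setminus S}\cup c_S$ is also spanning, so $T_S^+\in Cpl(W)$, and likewise $T_S^-\in Cpl(W)$. For the converse I would compare \eqref{cpltreepoly} --- which writes $\tau_W$ as a $\pm 1$ signed sum of distinct square-free monomials indexed by $Cpl(W)$ --- with the expansion of \eqref{treepoly}, whose non-cancelling monomials are precisely the $m_{T_S^\pm}$ for nontrivial $S$. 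Since monomials are distinct, this forces $Cpl(W)=\{T_S^\pm : \emptyset\ne S\subsetneq[k]\}$.

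The main technical point is the cyclic bookkeeping in $\tau_2$ together with the observation that monomial collisions between $\tau_1$ and $\tau_2$ occur only for rotation-invariant $S$; once this is pinned down, the rest of the argument is a routine expansion.
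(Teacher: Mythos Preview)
Your argument is correct but proceeds differently from the paper's. You parametrise the $H_a$ with $\tau_1(a)=1$ (resp.\ $\tau_2(a)=1$) explicitly as $T_S^+$ (resp.\ $T_{S'}^-$), pin down the overlap via the rotation-invariance of $S$, and then identify $\{T_S^\pm:\emptyset\ne S\subsetneq[k]\}$ with $Cpl(W)$ --- one inclusion by the arc argument, the other by matching monomials between \eqref{cpltreepoly} and the expansion of \eqref{treepoly}. The paper never enumerates $Cpl(W)$. For $(\Leftarrow)$ it simply evaluates \eqref{cpltreepoly} at $a$: if $H_a\in Cpl(W)$ then exactly the monomial $m_{H_a}$ survives, so $\tau_W(a)=1$. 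For $(\Rightarrow)$ it shows that $\tau_1(a)=1$ forces $H_a$ to have $k$ edges, rules out cycles in $H_a$ by a short case split, and then uses a complement trick: each binomial factor $m_{0,i}-m_{i,i\pm1}$ is unchanged under $a\mapsto\bar a$, so $\tau_W(\bar a)\neq 0$ as well, and the same edge-count/acyclicity argument applied to $\bar a$ shows that $\overline{H_a}\cap W$ is also a spanning tree. Your approach yields an explicit description of $Cpl(W)$ as a byproduct; the paper's is shorter, with the symmetry $a\mapsto\bar a$ disposing of the ``coupled'' condition in one line where you spend a paragraph on arcs and a monomial comparison.
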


\begin{proof}
$(\La)$ Suppose that $H_a$ is a coupled spanning tree of $W$.  When
$\tau_W$ is written in the form of equation (\ref{cpltreepoly}), over
$\Ff_2$, it is the sum of all coupled spanning trees of $W$.
Evaluating $\tau_W$ at $a$ gives exactly one non-zero term, 
hence $\tau_W(a)\neq 0$.

$(\Ra)$ Suppose $\tau_W(a)\neq 0$. First we show that $H_a$ is a
spanning tree of $W$.  Over $\Ff_2$ exactly one of $\tau_1(a)$ or
$\tau_2(a)$ has value 1, say $\tau_1(a)=1$.  Each binomial factor of
$\tau_1(a)$ must contain exactly one variable with value 1.
Therefore, $H_a$ contains exactly $k$ edges, which is the number of
edges of a spanning tree of $W$.  In order to show that $H_a$ is a
spanning tree it is enough to show that it is acyclic.  If $H_a$
contains a cycle $C$, then either there exist $i$ and $j$, $1\leq
i<j\leq k$, such that $v_0v_i,v_iv_{i+1},v_{j-1}v_j,v_0v_j\in E(C)$,
or $C$ is the set of chords of $W$.  In the first case, both terms
$\tau_1(a)$ and $\tau_2(a)$ have value 0 because $m_{0i}-m_{i(i+1)}$
is a factor of $\tau_1$ and $m_{0j}-m_{(j-1)j}$ is a factor of
$\tau_2$.  In the second case, formula (\ref{treepoly}) will be
$$\tau_W(a)=\prod_{i=1}^k(a_{0i}-1)-\prod_{i=1}^k(a_{0i}-1)=0.$$

Now we show that $H_a$ is in fact a \emph{coupled} spanning tree of
$W$.  Define $\ol{a}\in\Ff_2^{\binom{n}{2}}$ by $\ol{a_{ij}}=1-a_{ij}$
for all $1\leq i< j\leq n$.  Therefore, $G_{\ol{a}}=\ol{G_a}$ is the
complement of $G_a$.  If $\tau_W(a)\neq 0$ then $\tau_W(\ol{a})\neq 0$
because each binomial factor of $\tau_i(\ol{a})$ will have the same
value as in $\tau_i(a)$, for $i=1,2$.  Therefore
$H_{\ol{a}}=\ol{H_a}\cap W$ is a spanning tree of $W$, hence $H_a$ is
a coupled spanning tree of $W$.
\end{proof}

\begin{prop}\label{ZeroCograph}
Let $\ds a\in\Ff_2^{\binom{n}{2}}$.  The
following are equivalent:
\begin{enumerate}
\item\label{In} $a$ is a zero of $I_n$;
\item\label{Jn} $a$ is a zero of $J_n$;
\item\label{cograph} $G_a$ is a complement-reducible graph.
\end{enumerate}
\end{prop}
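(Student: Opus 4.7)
The plan is to cycle through the implications $(\ref{In})\Rightarrow(\ref{Jn})\Rightarrow(\ref{cograph})\Rightarrow(\ref{In})$, using Proposition~\ref{TreeNotZero} to translate between vanishing of a tree polynomial and combinatorial properties of $H_a = G_a\cap W$.

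The first implication $(\ref{In})\Rightarrow(\ref{Jn})$ is immediate from the containment $J_n\subseteq I_n$. For $(\ref{Jn})\Rightarrow(\ref{cograph})$ I would argue the contrapositive. First I would identify the coupled spanning trees of a 3-wheel, i.e.\ of $K_4$: any spanning tree of $K_4$ has three edges, and among these only the Hamiltonian paths $P_4$ have complements (inside $K_4$) that are again trees, since each claw $K_{1,3}$ has a triangle as its complement. I would then observe that for a $K_4$-subgraph $Q$ the intersection $G_a\cap Q$ coincides with the induced subgraph $G_a|_{V(Q)}$, because $Q$ already carries every edge of $K_n$ on its vertex set. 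Hence if $G_a$ contains an induced $P_4$ on some four vertices, then for the corresponding $Q$ we have $G_a\cap Q = P_4$, which is a coupled spanning tree of $Q$; Proposition~\ref{TreeNotZero} then yields $\tau_Q(a)\neq 0$, so $a$ is not a zero of $J_n$.

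The main step is $(\ref{cograph})\Rightarrow(\ref{In})$, and the key ingredient is the characterization quoted in Section~2: a graph is complement-reducible if and only if for every induced subgraph $H$, either $H$ or $\ol{H}$ is disconnected. Assume $G_a$ is a cograph and suppose, for contradiction, that $\tau_W(a)\neq 0$ for some wheel $W\subseteq K_n$. By Proposition~\ref{TreeNotZero}, $H_a = G_a\cap W$ is a coupled spanning tree of $W$, so both $H_a$ and $\ol{H_a}\cap W$ are spanning trees of $W$ and in particular are connected graphs on $V(W)$. The inclusions $H_a\subseteq G_a|_{V(W)}$ and $\ol{H_a}\cap W\subseteq \ol{G_a}|_{V(W)}$ (the latter verified in the proof of Proposition~\ref{TreeNotZero}) then force both $G_a|_{V(W)}$ and $\ol{G_a}|_{V(W)}$ to be connected. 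But $G_a|_{V(W)}$ is an induced subgraph of the cograph $G_a$, hence is itself a cograph, and the characterization demands that one of it or its complement be disconnected, a contradiction.

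I do not expect a substantial obstacle. The one verification that is not pure bookkeeping is identifying the coupled spanning trees of $K_4$ as its Hamiltonian paths; everything else reduces to Proposition~\ref{TreeNotZero} combined with the standard structural property of cographs.
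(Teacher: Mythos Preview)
Your proposal is correct and follows essentially the same route as the paper: the same cycle of implications, the same identification of the coupled spanning trees of $K_4$ as the paths $P_4$, and the same appeal to the cograph characterization (one of $G_a|_{V(W)}$, $\ol{G_a}|_{V(W)}$ disconnected) combined with Proposition~\ref{TreeNotZero} for the step $(\ref{cograph})\Rightarrow(\ref{In})$. The only cosmetic differences are that you phrase $(\ref{Jn})\Rightarrow(\ref{cograph})$ as a contrapositive and $(\ref{cograph})\Rightarrow(\ref{In})$ as a contradiction, whereas the paper argues both directly.
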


\begin{proof}
$(\ref{In}\Ra\ref{Jn})$ This implication follows from 
the containment $J_n\subseteq I_n$.

$(\ref{Jn}\Ra\ref{cograph})$ Suppose $\ds a\in\Ff_2^{\binom{n}{2}}$ is
a zero of $J_n$.  By Proposition \ref{TreeNotZero}, if $W\subseteq
K_n$ is any 3-wheel (and hence isomorphic to $K_4$), then $G_a\cap W$
is not a coupled spanning tree of $W$.  Since every coupled spanning
tree of $K_4$ is isomorphic to $P_4$ (the only spanning trees of $K_4$
are isomorphic to $P_4$ or the three edge star), $G_a$ does not
contain an induced $P_4$.

$(\ref{cograph}\Ra\ref{In})$ Let $\ds a\in\Ff_2^{\binom{n}{2}}$ be such 
that $G_a$ is a complement-reducible graph.  Let $W\subseteq K_n$ be a wheel with $V=V(W)$.
Either $G_a|_V$ or $\ol{G_a}|_V$ is disconnected, because $G_a$ is a 
complement-reducible graph.  Therefore either $G_a\cap W$ or 
$G_{\ol{a}}\cap W$ is disconnected. Since these two graphs are complementary
subgraphs of $W$, neither one is a coupled spanning tree.  Therefore, by 
Proposition \ref{TreeNotZero}, $\ds \tau_W(a)=0$ for every wheel 
$W\subseteq K_n$.
\end{proof}

\section{A bijection between complement-reducible graphs and switching classes}
In this section, we establish a bijection (Proposition \ref{Cog5Cyc}) between
the set of
graphs on $n$ labeled vertices with an induced $P_4$, and the switching
classes on $n+1$ labeled vertices containing a graph with an induced 5-cycle.  
Recall from Section \ref{SwitchSec} that each switching class contains 
exactly one graph in which the vertex $n+1$ is isolated.  Therefore the 
bijection from the set of graphs on $[n]$ to the switching classes 
on $[n+1]$ is given by sending $G\subseteq K_n$ to $[G]$, the orbit 
containing $G$.

\begin{prop}\label{Cog5Cyc}
Let the additive group $\Zz_2^n$ act on $\mathscr{G}_{n+1}$ by switching 
as described in Section \ref{SwitchSec}.  Then:
\begin{enumerate}
\item\label{Injective} If $G\in\mathscr{G}_{n+1}$ has an induced 5-cycle, 
then every $H\in [G]$ has an induced 4-path.
\item\label{Surjective} If $G\in\mathscr{G}_n$ has an induced
4-path, then, regarding $G$ as a graph on $[n+1]$ by introducing $n+1$ as
an isolated vertex, there is an $H\in \mathscr{G}_{n+1}$ such that
$G\in [H]$ and $H$ has an induced 5-cycle.
\end{enumerate}
\end{prop}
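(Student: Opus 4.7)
The key observation is that switching is local: for any $V\subseteq[n+1]$ and $X\subseteq[n]$, we have $s_X(G)|_V = s_{X\cap V}(G|_V)$, where on the right the switching acts within graphs on $V$. I use this to reduce each part to a calculation within five vertices.

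For Part (\ref{Injective}), let $V\subseteq[n+1]$ be the vertex set of the induced 5-cycle, and let $H=s_X(G)\in[G]$. Then $H|_V=s_{X\cap V}(C_5)$, so it suffices to show that for every $X'\subseteq V$, the graph $s_{X'}(C_5)$ has an induced $P_4$. Since $s_{X'}=s_{V\setminus X'}$ on $V$ and since $C_5$ has dihedral symmetry, the possible $X'$ fall into just four orbits, indexed by $|X'|=0$, $|X'|=1$, and $|X'|=2$ with the two elements adjacent or non-adjacent in $C_5$. When $|X'|=0$, the graph is $C_5$, which has a $P_4$ on any four vertices; when $|X'|=1$, switching preserves the induced subgraph on the other four vertices, again $P_4$. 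The two remaining cases are short direct computations.

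For Part (\ref{Surjective}), let $a,b,c,d\in[n]$ be the vertices of the induced $P_4$ of $G$, in that order, so $ab,bc,cd\in E(G)$ and $ac,ad,bd\notin E(G)$; regard $G$ as a graph on $[n+1]$ with $n+1$ isolated. I propose the switch $X:=\{a,d\}\subseteq[n]$ and set $H:=s_X(G)$; since switching is an involution, $G=s_X(H)\in[H]$. I claim $H|_{\{a,b,c,d,n+1\}}$ is the 5-cycle in cyclic order $(a,c,b,d,n+1)$. Verifying by checking all ten pairs: the edges $ab,cd$ are mixed with respect to $X$ and flip out; the non-edges $ac,bd,a(n+1),d(n+1)$ are mixed and flip in; and $bc,ad,b(n+1),c(n+1)$ are unmixed and remain as in $G$. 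This yields exactly the five edges $ac,cb,bd,d(n+1),(n+1)a$ of the claimed induced $C_5$.

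The main obstacle is Part (\ref{Surjective}): discovering the correct switch. The guiding principle is that the target 5-cycle must reuse the middle edge $bc$ of the path and convert the ``diagonal'' non-edges $ac$ and $bd$ into edges, which forces the split $\{a,d\}$ versus $\{b,c\}$; once guessed, the verification is mechanical. Part (\ref{Injective}) is a finite case check, substantially shortened by the dihedral and complementary symmetries of $C_5$.
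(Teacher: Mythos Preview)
Your proposal is correct and follows essentially the same argument as the paper. In Part~(\ref{Injective}) the paper likewise restricts to the five cycle vertices and does a case analysis on $|X\cap V(C)|$, using the complementation $s_{X'}=s_{V\setminus X'}$ to halve the cases; in Part~(\ref{Surjective}) the paper also switches at the two endpoints of the induced $P_4$ to produce the $5$-cycle through $n+1$, though it states this more tersely than your explicit ten-pair verification.
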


\begin{proof}
(\ref{Injective}) Let $G\in\mathscr{G}_{n+1}$ have an induced 5-cycle
  $C=\{v_1,\dots,v_5\}$, and let $X\subseteq [n]$.  If $|V(C)\cap X|<
  2$, then four of the vertices, say $U=\{v_1,v_2,v_3,v_4\}$, are in
  $[n]\setminus X$.  Switching by $X$ does not affect the induced
  subgraph on $U$.  Similarly, if $|V(C)\cap X|>3$, then
  $(s_X(G)\st_U)\cong P_4$.

\begin{comment}
If $|V(C)\cap X|< 2$ or $|V(C)\cap X|>3$, then there are four vertices,
say $Q=\{v_1,v_2,v_3,v_4\}$, that are either in $X$ or $[n+1]\setminus X$.
Switching by $X$ does not affect the induced subgraph on $Q$, therefore
$s_X(G)$ has an induced $P_4$.
\end{comment}

Suppose $|V(C)\cap X|=2$.  Without loss of
generality we may assume either $X=\{v_2,v_5\}$ or $X=\{v_3,v_4\}$. 
In both cases $v_5v_3v_4v_2$ is an induced 4-path in $s_X(G)$, as
shown in the figure.
\begin{figure}
      \resizebox{4in}{1.5in}{\includegraphics{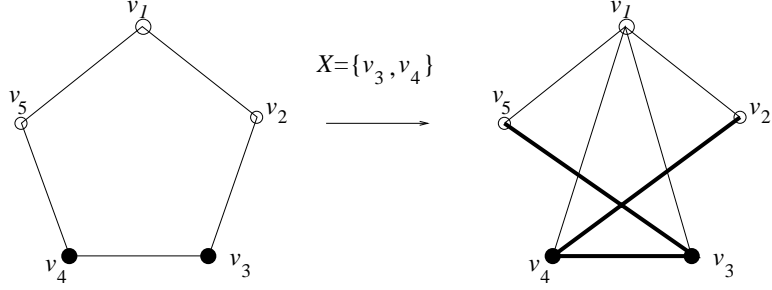}}
      \caption{The action with $X=\{v_3,v_4\}$}
      \label{first-switch}
\end{figure}
\begin{figure}
      \resizebox{4in}{1.5in}{\includegraphics{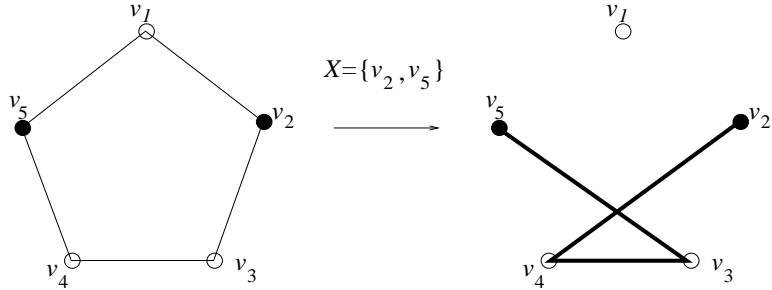}}
      \caption{The action with $X=\{v_2,v_5\}$}
      \label{second-switch}
\end{figure}
If $|V(C)\cap X|=3$, then $|V(C)\cap ([n+1]\setminus X)|=2$.  The same
results as above will hold for this case, therefore $s_X(G)$ has an induced
$P_4$.

(\ref{Surjective}) Suppose $v_5v_3v_4v_2$ is an induced $P_4$ in
$G\subseteq K_n$ and $X=\{v_2,v_5\}$.  Then $s_X(G)$
has the induced 5-cycle $C=\{v_1,\dots,v_5\}$ with $v_1=n+1$.
\end{proof}

\section{Counting points over other finite fields}
It is natural to ask whether these techniques can be extended to enumerate
points of the slope variety $S_n(\Ff_q)$ over  $\Ff_q$.  This problem 
appears to be difficult, because the zeroes of a tree
polynomial over an arbitrary field do not seem to admit a uniform 
graph-theoretic description as they do over $\Ff_2$.  In this section, 
we describe 
some partial progress in this direction, and explicitly work out the simplest
nontrivial case ($n=4$, $q=3$) to illustrate the kinds of difficulties 
involved.

A point in $\Ff_q^{\binom{n}{2}}$ 
corresponds to an \emph{$\Ff_q$-weighted $K_n$}, that is, a copy of
$K_n$ whose edges are assigned weights in $\Ff_q$.
For $a\in \Ff_q^{\binom{n}{2}}$ define $G_a$ to be the $\Ff_q$-weighted
$K_n$ where edge $ij$ is given weight $a_{ij}$.
We say that $G_a$ has a \emph{weight-induced subgraph} 
$H$ if there is some value $\alpha\in\Ff_q$ such that 
$$E(H)=\{e\in E(K_n)\st a_e=\alpha\}.$$

One possible  approach to generalizing the previous results would be 
to define a $q$-analogue to switching.  
Let the additive group $\Ff_q^n$ act on $\Ff_q^{\binom{n}{2}}$ by
$$((x_1,\dots,x_n)\cdot a))_{ij}=(a_{ij}+x_i+x_j).$$ If $q=2$, then
this is exactly the switching action described in Section
\ref{SwitchSec}.  Note that this is not the same definition of
$q$-switching given by Zaslavsky \cite{GlosSgnGain}.  The present
definition seems more likely to be relevant in the context of slopes
because it does not rely on an orientation of the edges.  (Recall that
the weight of an edge is the slope of the corresponding line segment
in a picture of $K_n$; the slope does not depend on the direction in
which way the edge is traversed.)  One would hope to generalize the
$q=2$ case by describing the points of $S_n(\Ff_q)$ in terms of
forbidden weight-induced subgraphs.  Though this does not appear to
work in general, some facts do carry over to the setting of an
arbitrary finite field.

\begin{prop}\label{Generalize}
Let $W=W(v_0;v_1,v_2,v_3)$ be a 3-wheel, and let $a\in\Ff_q^{\binom{4}{2}}$
be a point whose coordinates correspond to assigning weights to the 
edges of $W$.  Then:
\begin{enumerate}
\item\label{GenNot0} If $G_a$ has a weight-induced $P_4$, then $a$ is
  not a zero of $\tau_W$.
\item\label{GenIs0} If $G_a$ has a weight-induced claw (that is, a
  star with three edges), then $a$ is a zero of $\tau_W$.
\item\label{cycIs0} If $G_a$ has a weight-induced cycle, then $a$ is a
  zero of $\tau_W$.
\end{enumerate}
\end{prop}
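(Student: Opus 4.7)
The plan is to reason directly from the explicit formula \eqref{treepoly}, which for the $3$-wheel $W(v_0;v_1,v_2,v_3)$ expresses $\tau_W$ as $\tau_1-\tau_2$ with $\tau_1=\prod_{i=1}^3(m_{0,i}-m_{i,i+1})$ and $\tau_2=\prod_{i=1}^3(m_{0,i}-m_{i-1,i})$, all spoke indices read modulo~$3$.  Each factor of $\tau_j$ pairs a radius $v_0v_i$ with one of its two adjacent chords on the cycle $v_1v_2v_3$: the clockwise neighbor in $\tau_1$, the counter-clockwise one in $\tau_2$.  This pairing structure drives every case below.

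For \eqref{GenNot0}, I would classify a weight-induced $P_4$ in $K_4$ according to whether $v_0$ is an endpoint of the path (so $H$ consists of one radius and two consecutive chords) or an interior vertex (so $H$ consists of two radii and one chord).  In each subcase one checks that there is a unique spoke $v_i$ at which both the radius $v_0v_i$ and one of its adjacent chords in $W$ lie in $H$; the direction of that chord (clockwise or counter-clockwise) determines whether $\tau_1(a)$ or $\tau_2(a)$ is forced to vanish.  The opposite product is nonzero because each of its three binomial factors turns out to be \emph{mixed}, meaning that exactly one of the two paired edges lies in $H$; each such factor then has the form $\alpha-\beta$ with $\beta\neq\alpha$, hence is nonzero.

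For \eqref{GenIs0}, I would split on the center of the claw.  If the center is $v_0$, then all three radii carry weight $\alpha$ and no chord does, so $\tau_1(a)$ and $\tau_2(a)$ are both equal to $(\alpha-a_{1,2})(\alpha-a_{2,3})(\alpha-a_{1,3})$ after reindexing, giving $\tau_W(a)=0$.  If instead the center is a spoke $v_i$, then $v_0v_i$ together with both chords at $v_i$ all belong to $H$, which zeroes the factor $(m_{0,i}-m_{i,i+1})$ of $\tau_1$ and the factor $(m_{0,i}-m_{i-1,i})$ of $\tau_2$ simultaneously.  For \eqref{cycIs0}, a weight-induced cycle in $K_4$ has length $3$ or $4$: a chord-triangle reduces to the same commutativity argument as the $v_0$-centered claw; a triangle through $v_0$ of the form $v_0v_iv_{i+1}$ has the radius-chord pair at $v_i$ killing a factor of $\tau_1$ and the pair at $v_{i+1}$ killing a factor of $\tau_2$; and a $4$-cycle $v_0v_av_bv_cv_0$ has the spokes $v_a$ and $v_c$ each contributing a vanishing radius-chord pair, one to $\tau_1$ and the other to $\tau_2$.

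The only step that requires even mild care is \eqref{GenNot0}: to conclude $\tau_W(a)\neq 0$ one must also verify that the surviving product contains no accidentally-zero factor coming from two edges both outside $H$ whose weights in $\Ff_q$ happen to coincide.  This is a direct bookkeeping check against the edge structure of the finitely many $P_4$-placements inside $W$, and it is the only point in the proof where one cannot simply invoke symmetry of the chord variables.
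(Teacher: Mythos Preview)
Your proposal is correct and follows the same strategy as the paper: direct evaluation of the factored formula~\eqref{treepoly} for the $3$-wheel.  The only difference is organizational.  The paper exploits the vertex-transitivity of $K_4$ (so that $\tau_W$ is independent, up to sign, of which vertex is named the center) to \emph{relabel} $W$ so that the weight-induced $P_4$, claw, or cycle sits in one fixed position, and then does a single explicit computation in each part; you instead keep $v_0$ fixed and split into subcases according to where $v_0$ lies in the induced subgraph.  Both routes are equally valid; yours is a bit longer but does not rely on the relabeling invariance, which the paper uses without comment.

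One small clarification on your final caveat: once you have established (as you correctly claim) that the surviving product has all three factors mixed, there is nothing further to check.  A mixed factor is automatically nonzero because exactly one of its two edges carries the distinguished weight $\alpha$, and the other edge, being outside the weight-induced subgraph, has weight $\neq\alpha$.  The concern about two edges both outside $H$ with coinciding weights can only arise in the \emph{non}-surviving product, and there your case analysis already produces a ``both in $H$'' factor $(\alpha-\alpha)$ that kills it outright.  So the bookkeeping you flag is not actually needed.
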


\begin{proof}
(\ref{GenNot0}) Suppose that $G_a$ has a weight-induced $P_4$.   
The induced subgraph on the vertices of that $P_4$ can be drawn as
in Figure \ref{weight-p4}.
\begin{figure}
  \resizebox{2.1in}{1.4in}{\includegraphics{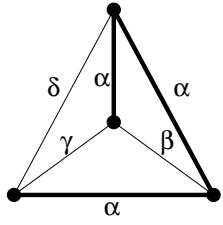}}
  \caption{A weight induced $P_4$}
  \label{weight-p4}
\end{figure}
where $\alpha,\beta,\gamma,\delta\in\Ff_q$, and $\alpha$ does not
equal any of the other values.
Then,
$$\tau_W(a)=(\alpha-\alpha)(\beta-\alpha)(\gamma-\delta)-
(\alpha-\delta)(\beta-\alpha)(\gamma-\alpha)\neq 0.$$

(\ref{GenIs0}) Suppose that $G_a$ has a weight-induced star $S_4$,
whose edges have the weight $\alpha\in\Ff_q$.  If we draw $W$ so that
the center of the star is the center of the wheel, then
$$\tau_Q(a)=(\alpha-\beta)(\alpha-\gamma)(\alpha-\delta)-
(\alpha-\gamma)(\alpha-\delta)(\alpha-\beta)=0,$$
for some $\beta,\gamma,\delta\in\Ff_q$.

(\ref{cycIs0}) Suppose that $G_a$ has a weight-induced cycle $C$.  The
graph $W$ can be drawn so that $C$ contains the vertex $v_0$.  Then,
for some $1\leq i<j\leq 3$, the edges $v_0v_i$, $v_iv_{i+1}$, $v_0v_j$,
$v_jv_{j-1}$ all have the same weight $\alpha$. (Note that if the
cycle is a 3-cycle then $v_{j-1}=v_i$.)  Then both $\tau_1(a)$ and
$\tau_2(a)$ contain the factor $\alpha-\alpha$, so $\tau_W(a)=0$.
\end{proof}

\begin{cor}
Let $a\in\Ff_q^{\binom{n}{2}}$.  If $G_a$ contains a weight-induced
$P_4$, then $a$ is not a zero of $I_n$ over $\Ff_q$.  Conversely,
if every 4-clique of $G_a$ contains a weight-induced cycle or a 
weight-induced star $S_4$, then $a$ is a zero of $I_n$ over $\Ff_q$.
\end{cor}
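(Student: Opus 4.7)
The plan is to apply Proposition \ref{Generalize} one 3-wheel at a time, since both hypotheses in the corollary are local conditions on 4-vertex subsets of $[n]$. Every induced $P_4$, cycle, or $S_4$ in a weighted $K_n$ sits inside some 4-vertex subset $U \subseteq [n]$, and the subgraph $K_n|_U$ is a $K_4$, i.e., a 3-wheel. So both directions reduce, in principle, to checking the single-wheel statements already in hand.

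For the forward direction, suppose $G_a$ has a weight-induced $P_4$ on $U = \{u_1, u_2, u_3, u_4\}$. Restricting $a$ to the edges of $W := K_n|_U$ preserves the weight-induced $P_4$ inside the 3-wheel $W$, because ``weight-induced'' is an intrinsic condition on the edges of the subgraph (the distinguished weight $\alpha$ still appears exactly on the three edges of the $P_4$ inside $W$). Proposition \ref{Generalize}(\ref{GenNot0}) then gives $\tau_W(a) \neq 0$, and since $\tau_W \in J_n \subseteq I_n$, $a$ cannot be a zero of $I_n$.

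For the converse, assume every 4-clique of $G_a$ contains a weight-induced cycle or a weight-induced $S_4$. For each 3-wheel $W \subseteq K_n$, its vertex set is a 4-clique, so Proposition \ref{Generalize}(\ref{cycIs0}) or (\ref{GenIs0}) yields $\tau_W(a) = 0$. This immediately shows that $a$ is a zero of $J_n$. The main obstacle is promoting this to a zero of $I_n$, since $I_n$ also contains tree polynomials of $k$-wheels with $k \geq 4$. The cleanest route would appeal to the conjectural identity $I_n = J_n$; lacking that, one unpacks \eqref{treepoly} for a general $k$-wheel and uses the 4-clique hypothesis to locate indices $i$ and $j$ so that a clockwise factor $(a_{0,i}-a_{i,i+1})$ of $\tau_1$ and a counter-clockwise factor $(a_{0,j}-a_{j-1,j})$ of $\tau_2$ both vanish, forcing $\tau_W(a) = 0$. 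This combinatorial extraction from the local 4-clique hypothesis is where the real work lies.
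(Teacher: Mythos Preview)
The paper gives no proof of this corollary; it is stated as an immediate consequence of Proposition~\ref{Generalize}. Your forward direction is exactly the intended argument and is correct: a weight-induced $P_4$ in $G_a$ lives on four vertices $U$, the restriction to $K_n|_U$ still has that $P_4$ as the full weight-$\alpha$ edge set (since every $\alpha$-edge of $K_n$ already lies in $U$), and part~(\ref{GenNot0}) of Proposition~\ref{Generalize} gives $\tau_W(a)\neq 0$ for that 3-wheel, so $a\notin V(I_n)$.

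For the converse you have gone further than the paper. You correctly observe that Proposition~\ref{Generalize} only treats 3-wheels, so applying parts~(\ref{GenIs0}) and~(\ref{cycIs0}) to every 4-clique yields $\tau_W(a)=0$ for every 3-wheel $W$, i.e., $a$ is a zero of $J_n$. You then flag the passage from $J_n$ to $I_n$ as ``the main obstacle,'' and this is a genuine issue that the paper simply does not address. Your proposed fix---finding indices $i,j$ making a clockwise factor of $\tau_1$ and a counter-clockwise factor of $\tau_2$ vanish---is one sufficient condition for $\tau_W(a)=0$, but it is not clear that the 4-clique hypothesis always produces such indices for an arbitrary $k$-wheel, and $\tau_W(a)=0$ can also hold with both products nonzero but equal. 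So your sketch is incomplete, and it is not obvious that it can be completed.

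In short: your argument matches the paper's (implicit) one and actually improves on it by isolating a gap the paper leaves open. As written, both your proof and the paper establish the converse only for $J_n$; extending it to $I_n$ either requires the conjectural equality $J_n=I_n$ at the level of zero sets or a separate argument for larger wheels that neither you nor the paper supplies.
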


\begin{example}
Let $W=W(v_0;v_1,v_2,v_3)$ be a 3-wheel.  We use Proposition 
\ref{Generalize} to count the number of zeroes of $\tau_W$ over $\Ff_3$.

If some value occurs at least four times in $a$, then $\tau_W(a)=0$
because $G_a$ has a weight-induced cycle.  If some value $\alpha$
occurs exactly three times in $a$, then $\tau_W(a)\neq 0$ if and only
if the weight-induced graph on $\alpha$ is a 4-path.  The cases where
each value of $a$ occurs two times are not covered by Proposition
\ref{Generalize}, so we must consider them separately.  For distinct
$\alpha,\beta,\gamma\in\Ff_3$ there are three possibilities, up to a
relabeling of the vertices; see Figure \ref{type-222}.
\begin{figure}[hbt]
  \resizebox{3.8in}{1.1in}{\includegraphics{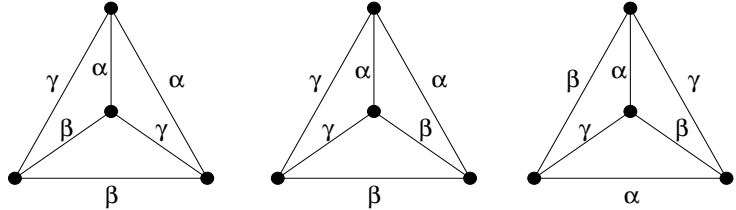}}
  \caption{The first weight corresponds to a zero of $\tau_{W}$, but
    the second two do not.}
  \label{type-222}
\end{figure}

Define the \emph{type} of $a\in\Ff_q^{d}$ to be the partition whose
  parts are the numbers of occurrences of each element of $\Ff_q$
  among the entries of $a$.  Some simple counting gives the following
  table:
\begin{center}
\begin{tabular}{c|c|c}
Type & Number of zeroes & Number of non-zeroes  \\\hline
$(6)$ & $3$ & $0$ \\
$(5,1)$ & $36$ & $0$\\
$(4,2)$ &  $90$ & $0$ \\
$(4,1,1)$ & $90$ & $0$ \\
$(3,3)$ & $24$ & $36$\\
$(3,2,1)$ & $144$ & $216$\\
$(2,2,2)$ & $36$ & $54$ \\\hline
Total & $423$ & $306$
\end{tabular}
\end{center}
\bigskip
\end{example}

If $q>3$, then there are more cases to check which are not covered by
Proposition \ref{Generalize}.  Using the computer algebra software
Maple, one can check that over $\Ff_3$ the number of zeroes of $I_4$
and $I_5$ are 423 and 9243, respectively.  Over $\Ff_5$ the numbers
are 4909, 262645, respectively.  It is not clear what combinatorial
structure (analogous to complement-reducible graphs) might count these
points; for instance, these numbers do not appear in the Encyclopedia
of Integer Sequences \cite{OEIS}.

\bibliographystyle{amsplain}
\bibliography{biblio}

\end{document}